\newcommand*{\htarrow}{\lhook\joinrel\twoheadrightarrow}
\begin{document}

\newcommand{\REMARK}[1]{\marginpar{\tiny #1}}
\newtheorem{thm}{Theorem}[subsection]
\newtheorem{lemma}[thm]{Lemma}
\newtheorem{corol}[thm]{Corollary}
\newtheorem{propo}[thm]{Proposition}
\newtheorem{defin}[thm]{Definition}
\newtheorem{Remark}[thm]{Remark}
\numberwithin{equation}{subsection}

\newtheorem{notas}[thm]{Notations}
\newtheorem{nota}[thm]{Notation}
\newtheorem{defis}[thm]{Definitions}
\newtheorem*{thm*}{Theorem}
\newtheorem*{prop*}{Proposition}
\newtheorem*{conj*}{Conjecture}

\def\Tm{{\mathbb T}}
\def\Um{{\mathbb U}}
\def\Am{{\mathbb A}}
\def\Fm{{\mathbb F}}
\def\Mm{{\mathbb M}}
\def\Nm{{\mathbb N}}
\def\Pm{{\mathbb P}}
\def\Qm{{\mathbb Q}}
\def\Zm{{\mathbb Z}}
\def\Dm{{\mathbb D}}
\def\Cm{{\mathbb C}}
\def\Rm{{\mathbb R}}
\def\Gm{{\mathbb G}}
\def\Lm{{\mathbb L}}
\def\Km{{\mathbb K}}
\def\Om{{\mathbb O}}
\def\Em{{\mathbb E}}
\def\Xm{{\mathbb X}}

\def\BC{{\mathcal B}}
\def\QC{{\mathcal Q}}
\def\TC{{\mathcal T}}
\def\ZC{{\mathcal Z}}
\def\AC{{\mathcal A}}
\def\CC{{\mathcal C}}
\def\DC{{\mathcal D}}
\def\EC{{\mathcal E}}
\def\FC{{\mathcal F}}
\def\GC{{\mathcal G}}
\def\HC{{\mathcal H}}
\def\IC{{\mathcal I}}
\def\JC{{\mathcal J}}
\def\KC{{\mathcal K}}
\def\LC{{\mathcal L}}
\def\MC{{\mathcal M}}
\def\NC{{\mathcal N}}
\def\OC{{\mathcal O}}
\def\PC{{\mathcal P}}
\def\UC{{\mathcal U}}
\def\VC{{\mathcal V}}
\def\XC{{\mathcal X}}
\def\SC{{\mathcal S}}
\def\RC{{\mathcal R}}

\def\BF{{\mathfrak B}}
\def\AF{{\mathfrak A}}
\def\GF{{\mathfrak G}}
\def\EF{{\mathfrak E}}
\def\CF{{\mathfrak C}}
\def\DF{{\mathfrak D}}
\def\JF{{\mathfrak J}}
\def\LF{{\mathfrak L}}
\def\MF{{\mathfrak M}}
\def\NF{{\mathfrak N}}
\def\XF{{\mathfrak X}}
\def\UF{{\mathfrak U}}
\def\KF{{\mathfrak K}}
\def\FF{{\mathfrak F}}

\def \longmapright#1{\smash{\mathop{\longrightarrow}\limits^{#1}}}
\def \mapright#1{\smash{\mathop{\rightarrow}\limits^{#1}}}
\def \lexp#1#2{\kern \scriptspace \vphantom{#2}^{#1}\kern-\scriptspace#2}
\def \linf#1#2{\kern \scriptspace \vphantom{#2}_{#1}\kern-\scriptspace#2}
\def \linexp#1#2#3 {\kern \scriptspace{#3}_{#1}^{#2} \kern-\scriptspace #3}

\def \Sel {{\mathop{\mathrm{Sel}}\nolimits}}
\def \Ext{\mathop{\mathrm{Ext}}\nolimits}
\def \ad{\mathop{\mathrm{ad}}\nolimits}
\def \sh{\mathop{\mathrm{Sh}}\nolimits}
\def \irr{\mathop{\mathrm{Irr}}\nolimits}
\def \FH{\mathop{\mathrm{FH}}\nolimits}
\def \FPH{\mathop{\mathrm{FPH}}\nolimits}
\def \coh{\mathop{\mathrm{Coh}}\nolimits}
\def \res{\mathop{\mathrm{Res}}\nolimits}
\def \op{\mathop{\mathrm{op}}\nolimits}
\def \rec {\mathop{\mathrm{rec}}\nolimits}
\def \art{\mathop{\mathrm{Art}}\nolimits}
\def \vol {\mathop{\mathrm{vol}}\nolimits}
\def \cusp {\mathop{\mathrm{Cusp}}\nolimits}
\def \scusp {\mathop{\mathrm{Scusp}}\nolimits}
\def \Iw {\mathop{\mathrm{Iw}}\nolimits}
\def \JL {\mathop{\mathrm{JL}}\nolimits}
\def \speh {\mathop{\mathrm{Speh}}\nolimits}
\def \isom {\mathop{\mathrm{Isom}}\nolimits}
\def \Vect {\mathop{\mathrm{Vect}}\nolimits}
\def \groth {\mathop{\mathrm{Groth}}\nolimits}
\def \hom {\mathop{\mathrm{Hom}}\nolimits}
\def \deg {\mathop{\mathrm{deg}}\nolimits}
\def \val {\mathop{\mathrm{val}}\nolimits}
\def \det {\mathop{\mathrm{det}}\nolimits}
\def \rep {\mathop{\mathrm{Rep}}\nolimits}
\def \spec {\mathop{\mathrm{Spec}}\nolimits}
\def \fr {\mathop{\mathrm{Fr}}\nolimits}
\def \frob {\mathop{\mathrm{Frob}}\nolimits}
\def \ker {\mathop{\mathrm{Ker}}\nolimits}
\def \im {\mathop{\mathrm{Im}}\nolimits}
\def \Red {\mathop{\mathrm{Red}}\nolimits}
\def \red {\mathop{\mathrm{red}}\nolimits}
\def \aut {\mathop{\mathrm{Aut}}\nolimits}
\def \diag {\mathop{\mathrm{diag}}\nolimits}
\def \spf {\mathop{\mathrm{Spf}}\nolimits}
\def \Def {\mathop{\mathrm{Def}}\nolimits}
\def \nrd {\mathop{\mathrm{nrd}}\nolimits}
\def \supp {\mathop{\mathrm{Supp}}\nolimits}
\def \Id {{\mathop{\mathrm{Id}}\nolimits}}
\def \lie {{\mathop{\mathrm{Lie}}\nolimits}}
\def \Ind{\mathop{\mathrm{Ind}}\nolimits}
\def \ind {\mathop{\mathrm{ind}}\nolimits}
\def \bad {\mathop{\mathrm{Bad}}\nolimits}
\def \top {\mathop{\mathrm{Top}}\nolimits}
\def \ker {\mathop{\mathrm{Ker}}\nolimits}
\def \coker {\mathop{\mathrm{Coker}}\nolimits}
\def \gal {{\mathop{\mathrm{Gal}}\nolimits}}
\def \Nr {{\mathop{\mathrm{Nr}}\nolimits}}
\def \rn {{\mathop{\mathrm{rn}}\nolimits}}
\def \tr {{\mathop{\mathrm{Tr~}}\nolimits}}
\def \Sp {{\mathop{\mathrm{Sp}}\nolimits}}
\def \st {{\mathop{\mathrm{St}}\nolimits}}
\def \sp{{\mathop{\mathrm{Sp}}\nolimits}}
\def \perv{\mathop{\mathrm{Perv}}\nolimits}
\def \tor {{\mathop{\mathrm{Tor}}\nolimits}}
\def \gr {{\mathop{\mathrm{gr}}\nolimits}}
\def \nilp {{\mathop{\mathrm{Nilp}}\nolimits}}
\def \obj {{\mathop{\mathrm{Obj}}\nolimits}}
\def \spl {{\mathop{\mathrm{Spl}}\nolimits}}
\def \unr {{\mathop{\mathrm{Unr}}\nolimits}}
\def \alg {{\mathop{\mathrm{Alg}}\nolimits}}
\def \grr {{\mathop{\mathrm{grr}}\nolimits}}
\def \cogr {{\mathop{\mathrm{cogr}}\nolimits}}
\def \coFil {{\mathop{\mathrm{coFil}}\nolimits}}

\def \rem{{\noindent\textit{Remark:~}}}
\def \rems{{\noindent\textit{Remarques:~}}}
\def \ext {{\mathop{\mathrm{Ext}}\nolimits}}
\def \End {{\mathop{\mathrm{End}}\nolimits}}

\def\semi{\mathrel{>\!\!\!\triangleleft}}
\let \DS=\displaystyle
\def\HT{{\mathop{\mathcal{HT}}\nolimits}}

\def \hi{\HC}
\newcommand*{\tarrow}{\relbar\joinrel\mid\joinrel\twoheadrightarrow}
\newcommand*{\harrow}{\lhook\joinrel\relbar\joinrel\mid\joinrel\rightarrow}
\newcommand*{\rarrow}{\relbar\joinrel\mid\joinrel\rightarrow}
\def \coim {{\mathop{\mathrm{Coim}}\nolimits}}
\def \can {{\mathop{\mathrm{can}}\nolimits}}
\def\LFF{{\mathscr L}}

\setcounter{secnumdepth}{3} \setcounter{tocdepth}{3}

\def \Fil{\mathop{\mathrm{Fil}}\nolimits}
\def \CoFil{\mathop{\mathrm{CoFil}}\nolimits}
\def \Fill{\mathop{\mathrm{Fill}}\nolimits}
\def \CoFill{\mathop{\mathrm{CoFill}}\nolimits}
\def\SF{{\mathfrak S}}
\def\PF{{\mathfrak P}}
\def \EFil{\mathop{\mathrm{EFil}}\nolimits}
\def \EFill{\mathop{\mathrm{EFill}}\nolimits}
\def \FP{\mathop{\mathrm{FP}}\nolimits}

\let \longto=\longrightarrow
\let \oo=\infty

\let \d=\delta
\let \k=\kappa

\renewcommand{\theequation}{\arabic{section}.\arabic{subsection}.\arabic{thm}}
\newcommand{\marque}{\addtocounter{thm}{1}
{\smallskip \noindent \textit{\thethm}~---~}}

\renewcommand\atop[2]{\ensuremath{\genfrac..{0pt}{1}{#1}{#2}}}

\newcommand\atopp[2]{\genfrac{}{}{0pt}{}{#1}{#2}}

\title[Construction of torsion cohomology classes]{Construction of torsion cohomology classes for KHT Shimura varieties}


\author{Boyer Pascal}
\email{boyer@math.univ-paris13.fr}
\address{Universit\'e Paris 13, Sorbonne Paris Cit\'e \\
LAGA, CNRS, UMR 7539\\ 
F-93430, Villetaneuse (France) \\
PerCoLaTor: ANR-14-CE25}

\frontmatter

\begin{abstract}
Let $\sh_K(G,\mu)$ be a  Shimura variety of KHT type,  as introduced in \cite{h-t}, associated 
to some similitude group $G/\Qm$ and a open compact subgroup $K$ of $G(\Am)$. For any irreducible
algebraic $\overline \Qm_l$-representation $\xi$ of $G$, let $V_\xi$ be the $\Zm_l$-local system 
on $\sh_K(G,\mu)$. From \cite{boyer-stabilization}, we know that if we allow the local component
$K_l$ of $K$ to be small enough, then there must exists some non trivial cohomology
classes with coefficient in $V_\xi$. The aim of this paper is then to construct explicitly such torsion
classes with the control of $K_l$. As an application we obtain the construction of some new automorphic
congruences between tempered and non tempered automorphic representations of the same
weight and same level at $l$.

\end{abstract}

\subjclass{11F70, 11F80, 11F85, 11G18, 20C08}


\keywords{Shimura varieties, torsion in the cohomology, maximal ideal of the Hecke algebra,
localized cohomology, galoisian representation}

\maketitle

\pagestyle{headings} \pagenumbering{arabic}

\tableofcontents
%
%

\section{Introduction}
\renewcommand{\theequation}{\arabic{equation}}

The class number formula for number fields (resp. the Birch-Swinnerton-Dyer conjecture)
asserts that the order of vanishing of the Dedekind zeta function at $s=0$ of a number field $K$ 
(resp. the order of vanishing at $s=1$ of the $L$-function of some elliptic curve $E$ over a
number field $K$) with the rank of its group of units (resp. with the rank of the Mordell-Weil
group $E(K)$). Both of these statements can be restated in terms of rank of Selmer groups and is
generalized for $p$-adic motivic Galois representations in the Bloch-Kato conjecture.

Since the work of Ribet, one strategy to realize a part of this conjecture is to consider some
automorphic tempered representation $\Pi$ of a reductive group $G/\Qm$ and take a prime
divisor $l$ of some special values of its $L$-function. We try then to construct an automorphic
non tempered representation $\Pi'$ of $G$ congruent to $\Pi$ modulo $l$ in some sense
so that such an automorphic congruence produces a non trivial element in some Selmer group.

In \cite{boyer-mrl} we show how to produce automorphic congruences from torsion classes
in the cohomology of Kottwitz-Harris-Taylor Shimura varieties associated to $G$, 
with coefficients in a local system
$\LC_\xi$ indexed by irreducible algebraic representations $\xi$, called weight, of $G(\Qm)$.
For example, see corollary 2.9 of \cite{boyer-mrl}, to each non trivial torsion cohomology class of level $I$,
we can associate an infinite collection of non isomorphic weakly congruent irreducible automorphic 
representations of the same weight and level but each of them being tempered.
In section 3 of \cite{boyer-mrl}, we obtained automorphic congruences between tempered and
non tempered automorphic representations but with distinct weights.
In \cite{boyer-stabilization}, using completed cohomology, we constructed automorphic congruences
between tempered and non tempered automorphic representations of the same weight but without any
control of their respective level at $l$ which might be an issue to construct then non trivial
element in some Selmer groups, cf. loc. cit.

Another way to interpret the computations of \cite{boyer-stabilization}, is to say that, whatever is the weight
$\xi$, if you take the level at $l$ small enough, then the cohomology groups of your KHT Shimura variety
with coefficients in $\LC_\xi$ can't be all free, there must exist some non trivial cohomology classes.
The main aim of this paper is then to find explicit conditions for the existence of non trivial
cohomology classes with coefficients in $\LC_\xi$, without playing with the level at $l$. 

As in previous work, we compute the cohomology groups of the Shimura variety $\sh(G,\mu)$
through the vanishing cycles spectral sequence, i.e. as the cohomology of the special
fiber of $\sh(G,\mu)$ at some place $v$ not dividing $l$, 
with coefficients in $\LC_\xi \otimes \Psi_v$ where $\Psi_v$
designates the perverse sheaf of vanishing cycles at $v$. In \cite{boyer-torsion} we explained
how to, using the Newton stratification of the special fiber of $\sh(G,\mu)$, construct a
$\overline \Zm_l$-filtration of $\Psi_v$ which graduates are some intermediate extensions of
the $\overline \Zm_l$-Harris-Taylor local systems constructed in \cite{h-t}. These local systems are
indexed by irreducible $\overline \Qm_l$-cuspidal entire representations $\pi_v$ of the linear group of
rank $g \leq d$, where $d$ is the dimension of $\sh(G,\mu)$: among the data is some lattice of 
the Steinberg representation $\st_t(\pi_v)$ with $tg \leq d$.
We then have a spectral sequence $E_1^{p,q}$ computing the $H^{p+q}(\sh_K(G,\mu),\LC_\xi)$:
up to translation we may suppose that $E_1^{p,q}=0$ for all $p<0$.
\begin{itemize}
\item The first idea to construct torsion could be to find some non trivial torsion classes in the $E_1$-page, i.e.
in the cohomology of the Harris-Taylor perverse sheaves. For example in \cite{boyer-aif} proposition 4.5.1,
we prove that if the modulo $l$ reduction of such $\pi_v$ is cuspidal but not supercuspidal, then, for
a well chosen level $K$, the cohomology groups of the associated Harris-Taylor perverse sheaves, 
can't be all free, so there is torsion on the $E_1$ page. Unfortunately it seems not so easy to prove
that such torsion cohomology classes remains on the $E_\oo$-page.

\item We can then try to produce torsion in the $E_2$ page by finding a map $d_1^{p,q}$ with
$$\xymatrix{
E_1^{p,q} \otimes_{\overline \Zm_l} \overline \Qm_l 
\ar[rr]^{d_1^{p,q} \otimes_{\overline \Zm_l} \overline \Qm_l} \ar@{->>}[d] & & E_1^{p+1,q}
\otimes_{\overline \Zm_l} \overline \Qm_l \\
Q \ar[rr]^\sim && Q' \ar@{^{(}->}[u]
}$$
such that the $\overline \Zm_l$-lattices of $Q$ and $Q'$ respectively induced by $E_1^{p,q}$
and $E_1^{p+1,q}$, are not isomorphic.
\end{itemize}
The idea to realize this last point, is to use the main result of \cite{boyer-local-ihara} where we describe
some of the lattices of the $\st_t(\pi_v)$ which appears as some data of the Harris-Taylor perverse
sheaves as graduates of the filtration of stratification of $\Psi_v$. These lattices verify some non
degeneracy persitence property in the following sense: the socle of their modulo $l$ reduction is irreducible
and non degenerate. In particular when the local system is concentrated in the supersingular
locus, which means with the previous notations that $\st_t(\pi_v)$ is a representation of
$GL_d(F_v)$, then this persitence of non degeneracy is also true for $E_1^{0,0}$ while all the
$E_1^{p,1-p}$ for $p>0$ containing $\st_t(\pi_v)$ are non trivially parabolically induced.
If we manage so that this non degenerate socle is cuspidal, necessary these induced lattices
don't satisfy the persitence of non degeneracy property, so that the $E_2$-page has non trivial
cohomology classes and it's then quite easy to prove that it remains at the
$E_\oo$-page.

In terms of $L$-functions, our assumption to realize this program, corresponds to the following.
Find an irreducible automorphic representation $\Pi$ of level $K$ such that its local $L$-factor at $p$
modulo $l$, has a pole at $s=1$.


\renewcommand{\theequation}{\arabic{section}.\arabic{subsection}.\arabic{thm}}

\section{Cohomology of Harris-Taylor perverse sheaves}

\subsection{Shimura varieties of Kottwitz-Harris-Taylor type}

\label{para-geo}

Let $F=F^+ E$ be a CM field where $E/\Qm$ is quadratic imaginary and $F^+/\Qm$
totally real with a fixed real embedding $\tau:F^+ \hookrightarrow \Rm$. For a place $v$ of $F$,
we will denote
\begin{itemize}
\item $F_v$ the completion of $F$ at $v$,

\item $\OC_v$ the ring of integers of $F_v$,

\item $\varpi_v$ a uniformizer,

\item $q_v$ the cardinal of the residual field $\kappa(v)=\OC_v/(\varpi_v)$.
\end{itemize}
Let $B$ be a division algebra with center $F$, of dimension $d^2$ such that at every place $x$ of $F$,
either $B_x$ is split or a local division algebra and suppose $B$ provided with an involution of
second kind $*$ such that $*_{|F}$ is the complexe conjugation. For any
$\beta \in B^{*=-1}$, denote $\sharp_\beta$ the involution $x \mapsto x^{\sharp_\beta}=\beta x^*
\beta^{-1}$ and $G/\Qm$ the group of similitudes, denoted $G_\tau$ in \cite{h-t}, defined for every
$\Qm$-algebra $R$ by 
$$
G(R)  \simeq   \{ (\lambda,g) \in R^\times \times (B^{op} \otimes_\Qm R)^\times  \hbox{ such that } 
gg^{\sharp_\beta}=\lambda \}
$$
with $B^{op}=B \otimes_{F,c} F$. 
If $x$ is a place of $\Qm$ split $x=yy^c$ in $E$ then 
\addtocounter{thm}{1}
\begin{equation} \label{eq-facteur-v}
G(\Qm_x) \simeq (B_y^{op})^\times \times \Qm_x^\times \simeq \Qm_x^\times \times
\prod_{z_i} (B_{z_i}^{op})^\times,
\end{equation}
where, identifying places of $F^+$ over $x$ with places of $F$ over $y$,
$x=\prod_i z_i$ in $F^+$.

\noindent \textbf{Convention}: for $x=yy^c$ a place of $\Qm$ split in $E$ and $z$ a place of $F$ over $y$
as before, we shall make throughout the text, the following abuse of notation by denoting 
$G(F_z)$ in place of the factor $(B_z^{op})^\times$ in the formula (\ref{eq-facteur-v}).

In \cite{h-t}, the author justify the existence of some $G$ like before such that moreover
\begin{itemize}
\item if $x$ is a place of $\Qm$ non split in $E$ then $G(\Qm_x)$ is quasi split;

\item the invariants of $G(\Rm)$ are $(1,d-1)$ for the embedding $\tau$ and $(0,d)$ for the others.
\end{itemize}

As in  \cite{h-t} bottom of page 90, a compact open subgroup $U$ of $G(\Am^\oo)$ is said 
\emph{small enough}
if there exists a place $x$ such that the projection from $U^v$ to $G(\Qm_x)$ does not contain any 
element of finite order except identity.

\begin{nota}
Denote $\IC$ the set of open compact subgroup small enough of $G(\Am^\oo)$.
For $I \in \IC$, write $X_{I,\eta} \longrightarrow \spec F$ the associated
Shimura variety of Kottwitz-Harris-Taylor type.
\end{nota}

\begin{defin} \label{defi-spl}
Define $\spl$ the set of  places $v$ of $F$ such that $p_v:=v_{|\Qm} \neq l$ is split in $E$ and
$B_v^\times \simeq GL_d(F_v)$.  For each $I \in \IC$, write
$\spl(I)$ the subset of $\spl$ of places which doesn't divide the level $I$.
\end{defin}

In the sequel, $v$ will denote a place of $F$ in $\spl$. For such a place $v$ 
the scheme $X_{I,\eta}$ has a projective model $X_{I,v}$ over $\spec \OC_v$
with special fiber $X_{I,s_v}$. For $I$ going through $\IC$, the projective system $(X_{I,v})_{I\in \IC}$ 
is naturally equipped with an action of $G(\Am^\oo) \times \Zm$ such that 
$w_v$ in the Weil group $W_v$ of $F_v$ acts by $-\deg (w_v) \in \Zm$,
where $\deg=\val \circ \art^{-1}$ and $\art^{-1}:W_v^{ab} \simeq F_v^\times$ is Artin's isomorphism
which sends geometric Frobenius to uniformizers.

\begin{notas} (see \cite{boyer-invent2} \S 1.3)
For $I \in \IC$, the Newton stratification of the geometric special fiber $X_{I,\bar s_v}$ is denoted
$$X_{I,\bar s_v}=:X^{\geq 1}_{I,\bar s_v} \supset X^{\geq 2}_{I,\bar s_v} \supset \cdots \supset 
X^{\geq d}_{I,\bar s_v}$$
where $X^{=h}_{I,\bar s_v}:=X^{\geq h}_{I,\bar s_v} - X^{\geq h+1}_{I,\bar s_v}$ is an affine 
scheme\footnote{see for example \cite{ito2}}, smooth of pure dimension $d-h$ built up by the geometric 
points whose connected part of its Barsotti-Tate group is of rank $h$.
For each $1 \leq h <d$, write
$$i_{h}:X^{\geq h}_{I,\bar s_v} \hookrightarrow X^{\geq 1}_{I,\bar s_v}, \quad
j^{\geq h}: X^{=h}_{I,\bar s_v} \hookrightarrow X^{\geq h}_{I,\bar s_v},$$
and $j^{=h}=i_h \circ j^{\geq h}$.
\end{notas}
%

For $1 \leq h < d$, the Newton stratum $X_{\IC,\bar s}^{=h}$ is geometrically induced
under the action of the parabolic subgroup $P_{h,d-h}(\OC_v)$ in the sense where there
exists a closed subscheme $X_{I,\bar s,\overline{1_h}}^{=h}$ stabilized by the Hecke 
action of $P_{h,d-h}(\OC_v)$ and such that
$$X_{\IC,\bar s}^{=h} \simeq X_{\IC,\bar s,\overline{1_h}}^{=h} 
\times_{P_{h,d-h}(\OC_v)} GL_d(\OC_v).$$
For $a \in GL_d(F_v)/P_{h,d-h}(F_v)$, we denote $X_{I,\bar s,a}^{=h}$ the image of
$X_{\IC,\bar s,\overline{1_h}}^{=h}$ by $a$ and $X_{I,\bar s,a}^{\geq h}$ its closure in 
$X_{I,\bar s}^{\geq h}$: they are stable under 
$P_a(F_v):=aP_{h,d-h}(F_v)a^{-1}$.

\subsection{Harris-Taylor perverse sheaves}

From now on, we fix a prime number $l$ unramified in $E$ and suppose that for every place $v$ of $F$ considered
after, its restriction $v_{|\Qm}$ is not equal to $l$.
For a representation $\pi_v$ of $GL_d(F_v)$ and $n \in \frac{1}{2} \Zm$, set 
$\pi_v \{ n \}:= \pi_v \otimes q_v^{-n \val \circ \det}$. Recall that
the normalized induction of two representations $\pi_{v,1}$ and $\pi_{v,2}$ 
of respectively $GL_{n_1}(F_v)$ and $GL_{n_2}(F_v)$ is
$$\pi_1 \times \pi_2:=\ind_{P_{n_1,n_1+n_2}(F_v)}^{GL_{n_1+n_2}(F_v)}
\pi_{v,1} \{ \frac{n_2}{2} \} \otimes \pi_{v,2} \{-\frac{n_1}{2} \}.$$
A representation
$\pi_v$ of $GL_d(F_v)$ is called \emph{cuspidal} (resp. \emph{supercuspidal})
if it's not a subspace (resp. subquotient) of a proper parabolic induced representation.

\begin{defin} \label{defi-rep} (see \cite{zelevinski2} \S 9 and \cite{boyer-compositio} \S 1.4)
Let $g$ be a divisor of $d=sg$ and $\pi_v$ an irreducible cuspidal 
$\overline \Qm_l$-representation of $GL_g(F_v)$. Then the normalized induced representation 
$$\pi_v\{ \frac{1-s}{2} \} \times \pi_v \{ \frac{3-s}{2} \} \times \cdots \times \pi_v \{ \frac{s-1}{2} \}$$ 
holds a unique irreducible quotient (resp. subspace) denoted $\st_s(\pi_v)$ (resp.
$\speh_s(\pi_v)$); it's a generalized Steinberg (resp. Speh) representation.
\end{defin}

The local Jacquet-Langlands correspondance is a bijection between irreducible essentially square
integrable representations of $GL_d(F_v)$, i.e. representations of the type $\st_s(\pi_v)$
for $\pi_v$ cuspidal, and irreducible representations of $D_{v,d}^\times$ where
$D_{v,d}$ is the central division algebra over $F_v$ with invariant $\frac{1}{d}$ and with maximal
order $\DC_{v,d}$.

\begin{nota}
We will denote $\pi_v[s]_D$ the irreductible representation of $D_{v,d}^\times$ associated 
to $\st_s(\pi_v^\vee)$ by the local Jacquet-Langlands correspondance.
\end{nota}

Let $\pi_v$ be an irreducible cuspidal $\overline \Qm_l$-representation of $GL_g(F_v)$ and
fix $t \geq 1$ such that $tg \leq d$. Thanks to Igusa varieties, Harris and Taylor
constructed a local system on  $X^{=tg}_{\IC,\bar s,\overline{1_h}}$
$$\LC_{\overline \Qm_l}(\pi_v[t]_D)_{\overline{1_h}}=\bigoplus_{i=1}^{e_{\pi_v}} 
\LC_{\overline \Qm_l}(\rho_{v,i})_{\overline{1_h}}$$
where $(\pi_v[t]_D)_{|\DC_{v,h}^\times}=\bigoplus_{i=1}^{e_{\pi_v}} \rho_{v,i}$ 
with $\rho_{v,i}$ irreductible. The Hecke action of $P_{tg,d-tg}(F_v)$ is then given
through its quotient $GL_{d-tg} \times \Zm$. These local systems have stable
$\overline \Zm_l$-lattices and we will write simply $\LC(\pi_v[t]_D)_{\overline{1_h}}$
for any $\overline \Zm_l$-stable lattice that we don't want to specify.

\begin{notas} For $\Pi_t$ any representation of $GL_{tg}$ and 
$\Xi:\frac{1}{2} \Zm \longrightarrow \overline \Zm_l^\times$ defined by 
$\Xi(\frac{1}{2})=q^{1/2}$, we introduce
$$\widetilde{HT}_1(\pi_v,\Pi_t):=\LC(\pi_v[t]_D)_{\overline{1_h}} 
\otimes \Pi_t \otimes \Xi^{\frac{tg-d}{2}}$$
and its induced version
$$\widetilde{HT}(\pi_v,\Pi_t):=\Bigl ( \LC(\pi_v[t]_D)_{\overline{1_h}} 
\otimes \Pi_t \otimes \Xi^{\frac{tg-d}{2}} \Bigr) \times_{P_{tg,d-tg}(F_v)} GL_d(F_v),$$
where the unipotent radical of $P_{tg,d-tg}(F_v)$ acts trivially and the action of
$$(g^{\oo,v},\left ( \begin{array}{cc} g_v^c & *†\\ 0 & g_v^{et} \end{array} \right ),\sigma_v) 
\in G(\Am^{\oo,v}) \times P_{tg,d-tg}(F_v) \times W_v$$ 
is given
\begin{itemize}
\item by the action of $g_v^c$ on $\Pi_t$ and 
$\deg(\sigma_v) \in \Zm$ on $ \Xi^{\frac{tg-d}{2}}$, and

\item the action of $(g^{\oo,v},g_v^{et},\val(\det g_v^c)-\deg \sigma_v)
\in G(\Am^{\oo,v}) \times GL_{d-tg}(F_v) \times \Zm$ on $\LC_{\overline \Qm_l}
(\pi_v[t]_D)_{\overline{1_h}} \otimes \Xi^{\frac{tg-d}{2}}$.
\end{itemize}
We also introduce
$$HT(\pi_v,\Pi_t)_{\overline{1_h}}:=\widetilde{HT}(\pi_v,\Pi_t)_{\overline{1_h}}[d-tg],$$
and the perverse sheaf
$$P(t,\pi_v)_{\overline{1_h}}:=j^{=tg}_{\overline{1_h},!*} HT(\pi_v,\st_t(\pi_v))_{\overline{1_h}} 
\otimes \Lm(\pi_v),$$
and their induced version, $HT(\pi_v,\Pi_t)$ and $P(t,\pi_v)$, where 
$\Lm^\vee$ is the local Langlands correspondence.
\end{notas}

\rem recall that $\pi'_v$ is said inertially equivalent to $\pi_v$ 
if there exists a character $\zeta: \Zm \longrightarrow  \overline \Qm_l^\times$ such that
$\pi'_v \simeq \pi_v \otimes (\zeta \circ \val \circ \det)$.
Note, cf. \cite{boyer-invent2} 2.1.4, that $P(t,\pi_v)$ depends only on the inertial class of $\pi_v$ and
$$P(t,\pi_v)=e_{\pi_v} \PC(t,\pi_v)$$ 
where $\PC(t,\pi_v)$ is an irreducible perverse sheaf. When we want to speak of
the $\overline \Qm_l$-versions we will add it on the notations.

Recall that the modulo $l$ reduction of an irreducible $\overline \Qm_l$-representation is still irreducible
and cuspidal but not necessary supercuspidal. In this last case, its supercuspidal support is a
Zelevinsky segment associated to some unique inertial equivalent classe of supercuspidal $\overline \Fm_l$-representation $\varrho$ of length in the following set
$\{ 1, m(\varrho),l m(\varrho),l^2 m(\varrho),\cdots \}.$

\begin{defi} \label{defi-type}
Let $i \in \Zm$ be greater than $-1$.
We say that $\pi_v$ is of $\varrho$-type $i$ when the supercuspidal support of its modulo
$l$ reduction is a Zelevinsky segment associated to $\varrho$ of length $1$ for $i=-1$ and 
$m(\varrho)l^i$ otherwise. 
\end{defi}

\begin{nota} We denote $\scusp_{\overline \Fm_l}(g)$ the set of inertial equivalence classes of
$\overline \Fm_l$-supercuspidal representations of $GL_g(F_v)$. For 
$\varrho \in \scusp_{\overline \Fm_l}(g)$, we then denote 
\begin{itemize}
\item $g_{-1}(\varrho)=g$ and for $i \geq 0$, $g_i(\varrho)=m(\varrho)l^i g_{-1}(\varrho)$;

\item $\scusp_i(\varrho)$ the set of inertial equivalence
classes of irreducible cuspidal $\overline \Qm_l$-representations of $\varrho$-type $i$.
\end{itemize}
\end{nota}

\subsection{Cohomology groups over $\overline \Qm_l$}

Let $\sigma_0:E \hookrightarrow
\overline{\Qm}_l$ be a fixed embedding and write $\Phi$ the set of embeddings 
$\sigma:F \hookrightarrow \overline \Qm_l$ whose restriction to $E$ equals $\sigma_0$.
There exists, \cite{h-t} p.97, then an explicit bijection between irreducible algebraic representations 
$\xi$ of $G$ over $\overline \Qm_l$ and $(d+1)$-uple
$\bigl ( a_0, (\overrightarrow{a_\sigma})_{\sigma \in \Phi} \bigr )$
where $a_0 \in \Zm$ and for all $\sigma \in \Phi$, we have $\overrightarrow{a_\sigma}=
(a_{\sigma,1} \leq \cdots \leq a_{\sigma,d} )$. 
We then denote $V_{\xi}$ the associated $\overline \Zm_l$-local system on $X_\IC$ 
Recall that an irreducible automorphic representation $\Pi$ is said $\xi$-cohomological if there exists
an integer $i$ such that
$$H^i \bigl ( ( \lie ~G(\Rm)) \otimes_\Rm \Cm,U,\Pi_\oo \otimes \xi^\vee \bigr ) \neq (0),$$
where $U$ is a maximal open compact subgroup modulo the center of $G(\Rm)$.

\begin{defin} \label{defi-degeneracy} (cf. \cite{M-W})
For $\Pi$ an automorphic irreducible representation $\xi$-cohomological of $G(\Am)$,
then, see for example lemma 3.2 of \cite{boyer-aif},  there exists an integer $s$ called the degeneracy depth of $\Pi$,
such that through Jacquel-Langlands correspondence and base change, its associated representation of $GL_d(\Am_\Qm)$
is isobaric of the following form
$$\mu | \det |^{\frac{1-s}{2}} \boxplus \mu | \det |^{\frac{3-s}{2}} \boxplus \cdots \boxplus \mu | \det |^{\frac{s-1}{2}}$$
where $\mu$ is an irreducible cuspidal representation of $GL_{d/s}(\Am_\Qm)$.
\end{defin}

\rem for a place $v$ such that $G(F_v) \simeq GL_d(F_v)$ in the sense of our previous convention,
the local component $\Pi_v$ of $\Pi$ at $v$ is isomorphic to some $\speh_s(\pi_v)$ where $\pi_v$
is an irreducible non degenerate representation, $s \geq 1$ is an integer and $\speh_s(\pi_v)$
is the Langlands quotients of the parabolic induced representation $\pi_v \{ \frac{1-s}{2} \} \times 
\pi_v \{ \frac{3-s}{2} \} \times \cdots \times \pi_v \{ \frac{s-1}{2} \}$. In terms of the Langlands
correspondence, $\speh_s(\pi_v)$ corresponds to $\sigma \oplus \sigma(1) \oplus \cdots \oplus \sigma(s-1)$ 
where $\sigma$ is the representation of $\gal(\bar F/F)$ associated to $\pi_v$ by the local Langlands 
correspondence.

\begin{nota}
For $\pi_v$ an irreducible cuspidal $\overline \Qm_l$-representation of $GL_g(F_v)$ and $t \geq 1$
such that $tg \leq d$, let denote
$$H^i_{!,1}(\pi_v,t,\xi):=\lim_{\atopp{\longrightarrow}{I \in \IC_v}}
H^i_c(X_{I,\bar s_v,\overline{1_{tg}}}^{=tg},V_\xi \otimes
j^{=tg,*}_{\overline{1_{tg}}} \PC(\pi_v,t)_{\overline{1_{tg}}})$$
and the induced version
$$H^i_{!}(\pi_v,t):=\lim_{\atopp{\longrightarrow}{I \in \IC_v}}
H^i_c(X_{I,\bar s_v}^{\geq tg},V_\xi \otimes
j^{=tg,*} \PC(\pi_v,t)) \simeq H^i_{!,1}(\pi_v,t)\times_{P_{tg,d-tg}(F_v)} GL_d(F_v),$$
as well as
$$H^i_{!*,1}(\pi_v,t):=\lim_{\atopp{\longrightarrow}{I \in \IC_v}}
H^i(X_{I,\bar s_v,\overline{1_{tg}}}^{\geq tg},V_\xi \otimes \PC(\pi_v,t)_{\overline{1_{tg}}})$$
and
$$H^i(\pi_v,t)_{!*}:=\lim_{\atopp{\longrightarrow}{I \in \IC_v}}
H^i(X_{I,\bar s_v}^{\geq tg},V_\xi \otimes \PC(\pi_v,t)) \simeq 
H^i_{!*,1}(\pi_v,t)\times_{P_{tg,d-tg}(F_v)} GL_d(F_v),$$
where $\IC_v$ is the set of $I \in \IC$ such that $I_v$ is of the form $K_v(n)$ for some $n \geq 1$.
\end{nota}

In this section we only consider the $\overline \Qm_l$-cohomology groups and we recall the computations
of \cite{boyer-compositio}. For $\Pi^{\oo,v}$ an irreducible representation of $G(\Am^{\oo,v})$, 
denote $[H^i_!(\pi_v,t,\xi) ] \{ \Pi^{\oo,v} \}$ the associated isotypic component. We will use similar
notations with the cohomology groups introduced above.
Consider now a fixed irreducible cuspidal representation $\pi_v$ of $GL_g(F_v)$.

\begin{propo} \label{prop-coho-Ql} (cf. \cite{boyer-aif} \S 3.2 and 3.3)
Let $\Pi$ be an irreducible automorphic representation of $G(\Am)$ which is $\xi$-cohomological and
with degeneracy depth $s \geq 1$.
\begin{itemize}
\item If $s=1$ then $[H^i_!(\pi_v,t,\xi) ] \{ \Pi^{\oo,v} \}$ and
$[H^i_{!,*}(\pi_v,t,\xi) ] \{ \Pi^{\oo,v} \}$ are all zero for $i \neq 0$.
For $i=0$, if $[H^i_!(\pi_v,t,\xi) ] \{ \Pi^{\oo,v} \} \neq (0)$ (resp.
$[H^i_{!*}(\pi_v,t,\xi) ] \{ \Pi^{\oo,v} \} \neq (0)$) then $\Pi_v$ is of the following shape
$\st_k(\widetilde \pi_v) \times \Pi'_v$ where $\widetilde \pi_v$ is inertially equivalent to $\pi_v$
and $k \leq t$ (resp. $k=t$).

\item For $s \geq 1$, and $\Pi_v$ of the following shape $\speh_s(\pi_v \times \pi'_v)$,
$\pi'_v$ any irreducible representation of $GL_{\frac{d-sg}{s}}(F_v)$, then
$[H^i_!(\pi_v,t,\xi) ] \{ \Pi^{\oo,v} \}$ (resp. $[H^i_{!*}(\pi_v,t,\xi) ] \{ \Pi^{\oo,v} \}$) is non zero if and only
if $i=s-1$ and $t \geq s$ (resp. $t=s$ and $i \equiv s-1 \mod 2$ with $|i| \leq s-1$).
\end{itemize}
\end{propo}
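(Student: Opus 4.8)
The plan is to reduce everything to the computation of the cohomology of Harris--Taylor local systems over $\overline{\Qm}_l$, for which the key input is the description of the cohomology of the Newton strata via the Igusa variety tower and the Hochschild--Serre type spectral sequence relating $H^i_{!,1}(\pi_v,t,\xi)$ to automorphic representations. First I would recall from \cite{boyer-compositio} (and \cite{boyer-aif} \S 3.2) that, after localizing at an irreducible representation $\Pi^{\oo,v}$ of $G(\Am^{\oo,v})$, the $\overline{\Qm}_l$-cohomology $[H^i_{!}(\pi_v,t,\xi)]\{\Pi^{\oo,v}\}$ is governed, through the Harris--Taylor description, by the $\pi_v$-isotypic part of the Jacquet module of $\Pi_v$ along $P_{tg,d-tg}(F_v)$, together with the constraint that the whole representation $\Pi$ be $\xi$-cohomological with degeneracy depth $s$. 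Concretely, one uses that $\Pi_v \simeq \speh_s(\mu_v)$ for $\mu_v$ the cuspidal (base changed) component of $\Pi_v$, and then computes $\hom_{GL_{tg}(F_v)}\bigl(\st_t(\pi_v), \mathrm{r}_{P}(\speh_s(\mu_v))\bigr)$ by Bernstein--Zelevinsky theory (Tadi\'c's formula for the Jacquet module of a Speh representation).

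The second step is the case $s=1$: here $\Pi_v$ is a Langlands quotient attached to an unramified twist situation, i.e. $\Pi_v = \speh_1(\mu_v) = \mu_v$ is itself irreducible and generic (non-degenerate), and one checks directly that the only non-vanishing cohomological degree for both $H^i_{!}$ and $H^i_{!*}$ is $i=0$; this is the temperedness/purity statement — the relevant local system is pure and the affineness of the open Newton strata $X^{=tg}$ forces concentration in the middle degree for $H_c$ of the open stratum against the $!*$-extension restricted there, while Artin vanishing handles the other side. The shape $\st_k(\widetilde\pi_v)\times \Pi'_v$ with $k\le t$ (resp.\ $k=t$ for the $!*$ version) then falls out of the explicit form of $j^{=tg,*}\PC(\pi_v,t)$ and the fact that a generic $\Pi_v$ contributing to the $\pi_v$-part of the Jacquet module must contain a Steinberg segment based at $\pi_v$ of length at most $t$; for the intermediate extension the support condition upgrades $k\le t$ to $k=t$ by a weight argument on the boundary strata $X^{\geq tg+1}$.

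The third step is $s\ge 1$ with $\Pi_v$ of the form $\speh_s(\pi_v\times\pi'_v)$. Here I would feed this particular $\Pi_v$ into Tadi\'c's Jacquet module formula: the semisimplification of $\mathrm{r}_{P_{tg,d-tg}}\bigl(\speh_s(\pi_v\times\pi'_v)\bigr)$ contains $\st_t(\pi_v)\otimes(\cdots)$ exactly once, only when $t\ge s$, the "$(\cdots)$" being a twist of $\speh_s(\pi'_v)$-type data; combined with the Harris--Taylor computation this shows $[H^i_{!}(\pi_v,t,\xi)]\{\Pi^{\oo,v}\}\ne 0$ iff $i=s-1$ and $t\ge s$ — the degree $s-1$ coming from the fact that a Speh representation of "length $s$" contributes cohomology of the $(s-1)$-dimensional unipotent-type factor, equivalently from the $s$-step filtration of $\Psi_v$ restricted to the relevant stratum. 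For the intermediate extension version $H^i_{!*}$, one instead uses the hard Lefschetz / weight-monodromy property of $\PC(\pi_v,t)$: the intermediate extension is pure, so its cohomology on $X^{\geq tg}$ is symmetric and non-zero precisely in degrees $i\equiv s-1 \pmod 2$, $|i|\le s-1$, and the rigidity $t=s$ is forced because for $t>s$ the $\st_t(\pi_v)$-part simply does not appear in the socle/cosocle of the relevant Jacquet module of $\speh_s(\pi_v\times\pi'_v)$.

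The main obstacle I expect is controlling the intermediate-extension cohomology $H^i_{!*}$ precisely — i.e.\ going from the open-stratum statement to the closed-stratum statement. Over $\overline{\Qm}_l$ this amounts to running the long exact sequences attached to the stratification $X^{\geq tg}\supset X^{\geq tg+1}\supset\cdots$ and showing that the boundary contributions either vanish or, via purity and the bound $k\le t$ on Steinberg lengths in the generic members, do not interfere; the clean way is to invoke the (known, over $\overline{\Qm}_l$) fact that $\PC(\pi_v,t)$ is a pure perverse sheaf together with the semisimplicity of its cohomology as a module over the local Hecke algebra, reducing the degree-parity and range statements to Poincar\'e duality on $X^{\geq tg}$. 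Everything else is essentially bookkeeping with Zelevinsky segments and the dictionary between $\speh_s$ and Galois representations $\sigma\oplus\sigma(1)\oplus\cdots\oplus\sigma(s-1)$ recalled just before the statement.
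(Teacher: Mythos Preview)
The paper does not actually prove this proposition: it is stated with the reference ``(cf.\ \cite{boyer-aif} \S 3.2 and 3.3)'' and followed only by the remark that the complete description of these cohomology groups is given in \cite{boyer-aif}. So there is no in-paper argument to compare against; your sketch is being measured against results imported wholesale from \cite{boyer-compositio} and \cite{boyer-aif}.

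That said, your outline is broadly the right shape and is consistent with how those references proceed: the Igusa/Harris--Taylor description of the fibers of $H^i_{!,1}$, Bernstein--Zelevinsky/Tadi\'c combinatorics for the Jacquet modules of $\speh_s$, Artin vanishing and purity for concentration in degree $0$ when $s=1$, and the resolution of $\lexp p j^{=tg}_{!*}HT(\pi_v,\Pi_t)$ by the $j^{=kg}_!$'s (what the paper labels (\ref{eq-resolution0})) together with duality to pass from $H^i_!$ to $H^i_{!*}$. One point to sharpen: your justification for the constraint $t=s$ in the $!*$ case (``$\st_t(\pi_v)$ does not appear in the socle/cosocle for $t>s$'') is not quite the mechanism. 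What actually happens is that for $t>s$ the spectral sequence coming from the resolution (\ref{eq-resolution0}) has its $E_1^{p,q}$ terms $H^{p+q}_!(\pi_v,t+\bullet)$ all concentrated in a single degree $p+q=s-1$ (by the $H^i_!$ part you already sketched), and the resulting complex is then exact over $\overline\Qm_l$; equivalently, the purity weight of $\PC(\pi_v,t)$ for $t>s$ is incompatible with the weights carried by a depth-$s$ Speh contribution. Your ``socle/cosocle'' phrasing gestures at this but would not by itself rule out contributions in the \emph{interior} of the Jacquet filtration.
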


\rem In \cite{boyer-aif}, we give the complete description of these cohomology groups.

\section{Torsion in the cohomology of KHT-Shimura varieties}

As explained in the introduction, to construct non trivial torsion cohomology classes with coefficients
in $\LC_\xi$, we use a spectral sequence $E_1^{p,q}$ obtained from a filtration of stratification
of the perverse sheaf of nearby cycles $\Psi_\IC$, whose $E_1$ terms are given by the cohomology
groups of Harris-Taylor perverse sheaves. The argument takes place in two steps: 
\begin{itemize}
\item the construction of non trivial torsion classes in the $E_2$ page, cf. \S \ref{para-torsion-classes1};

\item we then have to prove that these previous torsion classes remain in the $E_\oo$.
\end{itemize}
To be able to do the second point we need to have some informal information about the
torsion classes in the cohomology of the Harris-Taylor perverse sheaves: it's the aim of the next
section.

\subsection{Torsion for Harris-Taylor perverse sheaves}

From now on, we fix an irreducible supercuspidal $\overline \Fm_l$-representation $\varrho$ and all the
irreducible cuspidal $\overline \Qm_l$-representation $\pi_v$ considered will be of type $\varrho$.
In \cite{boyer-torsion}, using the adjunction maps $\Id \longrightarrow j^{=h}_* j^{=h,*}$, we construct
a filtration called of stratification in loc. cit.
$$0=\Fil^{-d}_*(\pi_v,\Pi_t) \subset \Fil^{1-d}_*(\pi_v,\Pi_t) \subset \cdots \subset
\Fil^0_*(\pi_v,\Pi_t)=j^{=tg}_{!} HT(\pi_v, \Pi_t),$$
with free gradutates $\gr^{-r}_*(\pi_v,\Pi_t):=\Fil^{-r}_*(\pi_v,\Pi_t)/\Fil^{-r-1}_*(\pi_v,\Pi_t)$ which are trival
except for $r=kg-1$ with $t \leq k \leq s$ and then verifying
\begin{multline*}
\lexp p  j^{= kg}_{!*} HT (\pi_v,\Pi_t \overrightarrow{\times} \st_{k-t}(\pi_v)) \otimes \Xi^{(t-k)/2} \\
\htarrow \gr^{1-kg}_*(\pi_v,\Pi_t) \htarrow  \\
\lexp {p+}  j^{= kg}_{!*} 
HT (\pi_v,\Pi_t \overrightarrow{\times} \st_{k-t}(\pi_v)) \otimes \Xi^{(t-k)/2},
\end{multline*}
where we recall that $\htarrow_+$ means a bimorphism, i.e. both a mono and a epi-morphism,
whose cokernel has support in $X^{\geq (kg+1}_{\IC,\bar s}$.
In \cite{boyer-duke}, we in fact proved that each of these graduates are isomorphic to the $p$-intermediate
extensions.

Meanwhile when $g=1$ and $\pi_v=\chi_v$ is a character, 
then the associated Harris-Taylor local system on $X^{=h}_{\IC,\bar s}$ is just the trivial one
$\overline \Zm_l$ where the fundamental group $\Pi_1(X^{=h}_{\IC,\bar s})$ 
acts by its quotient $\Pi_1(X^{=h}_{\IC,\bar s}) \twoheadrightarrow \DC_{v,h}^\times$ with
$\DC_{v,h}^\times$ acting by the character $\chi_v$. Then as $X^{\geq h}_{\IC,\bar s,\overline{1_h}}$
is smooth over $\spec  \overline \Fm_p$, then this Harris-Taylor local system shifted by the dimension
$d-h$, is perverse for both $t$-structures $p$ and $p+$, in particular the two intermediate extensions are 
equal so the previous short exact sequence is trivially true when $g=1$.

One of the main result of \cite{boyer-duke} is that this equality of perverse extensions remains true
for every Harris-Taylor local systems associated to any irreducible cuspidal representation $\pi_v$ such
that its modulo $l$ reduction is still supercuspidal, that is, with definition \ref{defi-type}, is of $\varrho$-type
$-1$.

More generally for every $\pi_v \in \scusp_i(\varrho)$ we prove the following long exact sequence
\addtocounter{thm}{1}
\begin{multline} \label{eq-resolution0}
0 \rightarrow j_!^{=s g} HT(\pi_v,\Pi_t \overrightarrow{\times} \speh_{s-t}(\pi_v)) \otimes \Xi^{\frac{s-t}{2}}
\longrightarrow \cdots \longrightarrow \\
j^{=(t+2)g}_! HT(\pi_v,\Pi_t†\overrightarrow{\times} \speh_2(\pi_v)) \otimes \Xi^1 
\longrightarrow j_!^{= (t+1)g} HT(\pi_v,\Pi_t \overrightarrow{\times} \pi_v) \otimes \Xi^{\frac{1}{2}} \\
\longrightarrow  
j^{=tg}_! HT(\pi_v,\Pi_t) \longrightarrow \lexp p j^{=tg}_{!*} HT(\pi_v,\Pi_t)
\rightarrow 0,
\end{multline}
which is equivalent to the property that the sheaf cohomology groups of $\lexp p j^{=tg}_{!*} HT(\pi_v,\Pi_t)$
are torsion free.

For $I \in \IC$ a finite level, let $\Tm_I:=\prod_{x \in \unr(I)} \Tm_{x}$ be the unramified Hecke algebra
where $\unr(I)$ is the union of places $x$ where $G$ is unramified and $I_x$ maximal, and where
$\Tm_x \simeq \overline \Zm_l[X^{un}(T_x)]^{W_x}$ for $T_x$ a split torus,
$W_x$ the spherical Weyl group and $X^{un}(T_x)$ the set of $\overline \Zm_l$-unramified characters 
of $T_x$. 

\begin{nota}
Consider a fixed maximal ideal $\mathfrak m$ of $\Tm_I$. For every $q \in \unr(I)$ let denote
$S_{\mathfrak m}(q)$ the multi-set 
of modulo $l$ Satake parameters at $q$ associated to $\mathfrak m$. For any $\Tm_I$-module $M$,
we moreover denote $M_{\mathfrak m}$ its localization at $\mathfrak m$.
\end{nota}

Consider $\pi \in \scusp_{-1}(\varrho)$ and denote $g=g_{-1}(\varrho)$ and let denote
$$H^i_{!*}(\pi_v,t)_{\mathfrak m}:=\lim_{\atopp{\longrightarrow}{I \in \IC_v}}
H^i(X_{I,\bar s_v}^{\geq tg},V_\xi \otimes 
\lexp p j^{=tg}_{!*} HT(\pi_v,t))_{\mathfrak m}$$
and
$$H^i_{!}(\pi_v,t)_{\mathfrak m}:=\lim_{\atopp{\longrightarrow}{I \in \IC_v}}
H^i(X_{I,\bar s_v}^{\geq tg},V_\xi \otimes j^{=tg}_{!} HT(\pi_v,t))_{\mathfrak m}.$$
In \cite{boyer-mrl} we proved that torsion classes arising in some cohomology group of the whole Shimura
variety, can be raised in characteristic zero to some automorphic tempered representation of $G(\Am)$ in 
the following sense.

\begin{defi}
A torsion class either in $H^i_{!*}(\pi_v,t)_{\mathfrak m}$ or in $H^i_{!}(\pi_v,t)_{\mathfrak m}$,
is said tempered $\xi$-cohomologial if there exists
an irreducible automorphic and $\xi$-cohomological tempered representation $\Pi$ unramified outside
$I$ and $p$ with $\Pi^\oo$ a subquotient of $\lim_{\rightarrow I \in \IC} 
H^{d-1}(X_{I,\bar \eta},V_{\xi,\overline \Qm_l})_{\mathfrak m}$.
\end{defi}

From now on we moreover suppose that $d=g_{-1}(\varrho)m(\varrho)l^u$ and we will pay attention to
irreducible $GL_d(F_v)$-subquotient of either $H^i_{!*}(\pi_v,t)_{\mathfrak m}[l]$ or 
$H^i_{!}(\pi_v,t)_{\mathfrak m}[l]$, isomorphic to $\rho_u$.

\begin{lemm} \label{lem-torsion-rel}
With the previous notations and assumptions with $\pi_{v,i} \in \scusp_{i}(\varrho)$ for $i \geq -1$, 
if there exists an irreducible subquotient
of $H^j_{!*}(\pi_{v,i},t)_{\mathfrak m}[l]$ (resp. $H^j_{!}(\pi_{v,i},t)_{\mathfrak m}[l]$), 
which is $GL_d(F_v)$-isotypic for $\rho_u$, then $j \in \{ 0,1 \}$ (resp. $j=1$).
\end{lemm}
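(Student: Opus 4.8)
The plan is to reduce the statement to the geometry of the Newton stratification together with the structure results on Harris--Taylor perverse sheaves already recalled in the excerpt. First I would treat the case $i=-1$, i.e. $\pi_v \in \scusp_{-1}(\varrho)$, where by the main result of \cite{boyer-duke} recalled above the $p$-intermediate extension $\lexp p j^{=tg}_{!*} HT(\pi_v,t)$ has torsion-free sheaf cohomology, equivalently the resolution (\ref{eq-resolution0}) holds. Here the hypothesis $d=g_{-1}(\varrho)m(\varrho)l^u$ forces the relevant Steinberg representation $\st_t(\pi_v)$, if it is to produce a $GL_d(F_v)$-subquotient isomorphic to $\rho_u$ after reduction mod $l$, to be a representation of $GL_d(F_v)$ itself, so $tg=d$ and the stratum $X^{=tg}_{\IC,\bar s}$ is the (zero-dimensional) supersingular locus. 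The plan is then to use the spectral sequence for the closed embedding of the supersingular stratum into $X^{\geq 1}_{\IC,\bar s}$, or equivalently the long exact sequence relating $H^i_{!}(\pi_v,t)_{\mathfrak m}$, $H^i_{!*}(\pi_v,t)_{\mathfrak m}$ and the cohomology of the cokernel supported in $X^{\geq tg+1}$, and read off from Proposition \ref{prop-coho-Ql} (applied after inverting $l$) together with the torsion-freeness that the only degrees in which a $\rho_u$-isotypic class can survive are $j\in\{0,1\}$ in the $!*$ case and $j=1$ in the $!$ case.

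Next I would handle the general $i\ge 0$ case by dévissage along the filtration of stratification of $j^{=tg}_!HT(\pi_v,\Pi_t)$ recalled above, whose graduates are the $p$-intermediate extensions $\lexp p j^{=kg}_{!*}HT(\pi_v,\Pi_t\overrightarrow\times\st_{k-t}(\pi_v))\otimes\Xi^{(t-k)/2}$ for $t\le k\le s$. The point is that a $GL_d(F_v)$-subquotient isomorphic to $\rho_u$ can only come from the top stratum $k=s$ with $sg=d$, because $\rho_u$ corresponds under Jacquet--Langlands to a Steinberg representation of $GL_d(F_v)$ and the lower graduates are supported on strata $X^{\geq kg}$ with $kg<d$, so their cohomology is (parabolically) induced from proper Levi subgroups and cannot contain $\rho_u$ as a $GL_d(F_v)$-subquotient once localized at $\mathfrak m$. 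I would make this precise using the compatibility of the Hecke action on $H^i_!(\pi_v,t)$ with the isomorphism $H^i_!(\pi_v,t)\simeq H^i_{!,1}(\pi_v,t)\times_{P_{tg,d-tg}(F_v)}GL_d(F_v)$ and the fact that a representation induced from a proper parabolic has no irreducible subquotient isomorphic to the (essentially square-integrable) $\rho_u$. Thus the claim for $\pi_{v,i}$ with $i\ge 0$ is again reduced to the supersingular-stratum computation, now for the perverse sheaf $\lexp p j^{=d}_{!*}HT(\pi_v,\st_s(\pi_v))$ on a $0$-dimensional scheme.

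Finally, for that last reduced statement I would invoke the concentration results of \cite{boyer-torsion} and \cite{boyer-duke}: on the supersingular (zero-dimensional) stratum the Harris--Taylor sheaf, shifted by $d-tg=0$, is perverse for both $t$-structures, and the cohomology of its constant-coefficient version is concentrated in a single degree; tensoring with the resolution (\ref{eq-resolution0}) of the non-supercuspidal strata (which is where the $\varrho$-type $i\ge 0$ enters) spreads this single degree into at most two consecutive degrees $\{0,1\}$ for $H^j_{!*}$, while for $H^j_!$ the extra $!$-extension shifts it to the single degree $j=1$. I would check the parity/degree bookkeeping against Proposition \ref{prop-coho-Ql} after $\otimes\overline\Qm_l$ to pin the degrees, and use the torsion-freeness of the intermediate extension's sheaf cohomology to guarantee that no further degree opens up upon reduction mod $l$. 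The main obstacle I anticipate is precisely this last bookkeeping: controlling exactly which cohomological degrees the $\rho_u$-isotypic part can live in after the dévissage, i.e. showing that the two-term spread $\{0,1\}$ (resp. the single term $\{1\}$) is not enlarged by boundary contributions from $X^{\geq d}$ — this is where the delicate interplay between the filtration of stratification, the localization at $\mathfrak m$, and the vanishing in Proposition \ref{prop-coho-Ql} has to be used carefully.
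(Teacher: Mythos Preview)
There is a genuine gap in your plan. The key false step is the claim that ``a representation induced from a proper parabolic has no irreducible subquotient isomorphic to the (essentially square-integrable) $\rho_u$''. This would be correct over $\overline\Qm_l$, but $\rho_u$ is an irreducible $\overline\Fm_l$-representation, and in the modular setting cuspidal non-supercuspidal representations \emph{are} subquotients of proper parabolic inductions --- that is exactly what being of $\varrho$-type $u\ge 0$ means. Concretely, $\rho_u$ appears in the modulo $l$ reduction of $\st_s(\pi_{v,-1})$, hence also in that of $\st_{s-1}(\pi_{v,-1})\times\pi_{v,-1}$, which is properly induced. So your reduction to the single case $tg=d$ collapses, and with it the whole dévissage argument for $i\ge 0$ (whose graduates for $k<s$ you wanted to discard for the same reason). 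The lemma genuinely has content for every $1\le t\le s$.

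The paper's proof proceeds quite differently. For $i=-1$ it runs a descending induction on $t$ from $t=s$ to $t=1$, using: (a) Artin vanishing for $j^{=tg}_!$ (affineness of the open strata) to kill $H^j_!$ for $j<0$ and make $H^0_!$ torsion-free; (b) the equality $\lexp p j_{!*}=\lexp{p+}j_{!*}$ for $\varrho$-type $-1$, which by duality reduces the $!*$ case to $j\le 0$; and (c) the long exact sequence coming from the truncated resolution (\ref{eq-resolution1}), where the relevant map in degree $0$ over $\overline\Qm_l$ is controlled by tempered automorphic contributions (Proposition \ref{prop-coho-Ql}). For $i\ge 0$ the paper does \emph{not} dévisser along the stratification filtration; instead it invokes Dat's description of the semisimplified mod $l$ reduction of $\pi_{v,i}[t]_D$ to identify $\Fm\bigl[j_!^{=tg_i(\varrho)}HT(\pi_{v,i},\Pi)\bigr]$ with a multiple of $\Fm\bigl[j_!^{=tm(\varrho)l^ig_{-1}(\varrho)}HT(\pi_{v,-1},\Pi)\bigr]$, and then transfers the $i=-1$ result via the universal-coefficient short exact sequence $0\to H^n\otimes\Fm_l\to H^n(\Fm\PC)\to H^{n+1}[l]\to 0$. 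Your outline contains none of these three ingredients (Artin vanishing, the induction on $t$ via the resolution, and the reduction to $i=-1$ through mod $l$ comparison of local systems), and without the first two there is no way to bound the degrees for $t<s$.
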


\begin{proof} 
Consider first the case of $i=-1$.
We argue by induction from $t=s=m(\varrho)l^u$ to $t=1$ with both $H^i_{!,1}(\pi_{v,-1},t)_{\mathfrak m}$
and $H^i_{!*}(\pi_{v,-1},t)_{\mathfrak m}$. Concerning $H^i_{!*}(\pi_{v,-1},t)_{\mathfrak m}$, 
recall that, as $\pi_{v,-1} \in \scusp_{-1}(\varrho)$ so that 
$$\lexp p j^{=tg}_{!*} HT(\pi_{v,-1},t)  \simeq \lexp {p+} j^{=tg}_{!*} HT(\pi_{v,-1},t),$$
then we only have to consider the case $i \leq 0$.
By Artin's theorem, see for example theorem 4.1.1 of \cite{BBD}, using the affiness of $X^{=h}_{I,\bar s_v}$, 
we know that $H^i_{!}(\pi_{v,-1},t)_{\mathfrak m}$ is zero for every $i<0$ and without torsion for $i=0$.

Note first that for $t=s$, then $HT(\pi_{v,-1},s)$ has support in dimension zero, so that 
$H^i_{!*}(\pi_{v,-1},s)_{\mathfrak m}=H^i_{!}(\pi_{v,-1},s)_{\mathfrak m}$ is zero for $i \neq 0$ and free for $i=0$, 
so the result is trivially true. Suppose by induction, the result true for all $t>t_0$ and consider the case of 
$H^i_{!*}(\pi_{v,-1},t)_{\mathfrak m}$ through the spectral sequence associated to the 
resolution (\ref{eq-resolution0}). Note first that concerning irreducible subquotient of the $l$-torsion
of the cohomology groups which are $GL_d(F_v)$-isomorphic to $\rho_u$, then we can truncate
(\ref{eq-resolution0}) to the short exact sequence of its last three terms.
\addtocounter{thm}{1}
\begin{multline} \label{eq-resolution1}
0 \dashrightarrow j_!^{= (t+1)g} HT(\pi_{v,-1},\Pi_t \overrightarrow{\times} \pi_{v,-1}) \otimes \Xi^{\frac{1}{2}} 
\longrightarrow  \\
j^{=tg}_! HT(\pi_{v,-1},\Pi_t) \longrightarrow \lexp p j^{=tg}_{!*} HT(\pi_{v,-1},\Pi_t)
\rightarrow 0.
\end{multline}
Then considering our problem for $H^i_{!*}(\pi_{v,-1},t)_{\mathfrak m}$, there is nothing to prove for $i \leq -1$
and for $i=0$ we conclude because over $\overline \Qm_l$
$$H^0(X_{I,\bar s_v}, j_!^{= (t+1)g} HT(\pi_{v,-1},\Pi_t \overrightarrow{\times} \pi_{v,-1}) \otimes \Xi^{\frac{1}{2}})
\longrightarrow H^0(X_{I,\bar s_v},j^{=tg}_! HT(\pi_{v,-1},\Pi_t))$$
is related to tempered automorphic $\xi$-cohomological representations.
We are then done with $H^i_{!*}(\pi_{v,-1},t)_{\mathfrak m}$.
The result about $H^i_{!}(\pi_{v,-1},t)_{\mathfrak m}$, then follows from the long exact sequence associated
to (\ref{eq-resolution1}).
%

\medskip

Consider now the case $i \geq 0$. Recall, cf. \cite{dat-jl} proposition 2.3.3, that the semi-simplification of
the modulo $l$ reduction of $\pi_{v,i}[t]_D$, doesn't depend of the choice of a stable lattice, and is equal to
$$\sum_{k=0}^{m(\varrho)l^i -1}\tau\{ - \frac{m(\varrho)l^i-1}{2} +k\}$$ 
where $\tau$ is the modulo $l$ reduction of $\pi_{v,-1}[tm(\varrho)l^i]_D$ which is irreducible, and
$\tau \{ n \}:=\tau \otimes q^{-n \val \circ \nrd}$ where $\nrd$ is the reduced norm.
In particular for any representation $\Pi_t$ of $GL_{tg_{-1}(\varrho)}(F_v)$, we have
\addtocounter{smfthm}{1}
\begin{multline} \label{eq-F-chgt2}
m(\varrho)l^i \Fm \Bigl [  j_!^{= tm(\varrho)l^ig_{-1}(\varrho)} HT(\pi_{v,-1},\Pi)\Bigr ] =m(\varrho)l^{i}
j_!^{= tm(\varrho)l^ig_{-1}(\varrho)} \Bigl [ \Fm HT(\pi_{v,-1},\Pi) \Bigr ]  \\ = 
j_!^{= tg_i(\varrho)} \Bigl [ \Fm HT(\pi_{v,i},\Pi) \Bigr ] = \Fm \Bigl [ 
j_!^{= tg_i(\varrho)} HT(\pi_{v,i},\Pi) \Bigr ] .$$
\end{multline}
Note moreover that concerning subquotient isomorphic to $\rho_u$,
the only case where it can appeared in the modulo $l$ reduction of some irreducible
subquotient of the free quotient of $H^j(X_{I,\bar s_v},j^{=tg}_{!} HT(\pi_{v,-1},\Pi_t))$ is when 
either $(t,j)=(s-1,1)$ or $j=0$.
The result about $H^j_{!}(\pi_{v,i},t)_{\mathfrak m}[l]$ 
then follows from the previous case where $i=-1$ using (\ref{eq-F-chgt2}) and the following 
wellknown short exact sequence
$$0 \to H^n(X,\PC) \otimes_{\Zm_l} \Fm_l \longrightarrow H^n(X,\Fm \PC ) 
\longrightarrow H^{n+1}(X,\PC) [l] \to 0,$$
for any $\Fm_q$-scheme $X$ and any $\Zm_l$-perverse free sheaf $\PC$.

Then the result about the cohomology of $\lexp p j^{=tg}_{!*} HT(\pi_{v,i},\Pi_t)$ follows from
the resolution analog of (\ref{eq-resolution1}), and the case of $\lexp {p+} j^{=tg}_{!*} HT(\pi_{v,i},\Pi_t)$
is obtained by Grothendieck-Verdier duality.
\end{proof}

\subsection{Perverse sheaf of vanishing cycles and lattices}

\begin{nota}
For $I \in \IC$, let 
$$\Psi_{I,v}:=R\Psi_{\eta_v,I}(\overline \Zm_l[d-1])(\frac{d-1}{2})$$
be the vanishing cycle autodual perverse sheaf on $X_{I,\bar s}$.
\end{nota}

In \cite{boyer-duke},  we gave a decomposition
$$\Psi_\IC \simeq \bigoplus_{g=1}^d \bigoplus_{\varrho \in \scusp_{\overline \Fm_l}(g)} \Psi_{\varrho}$$
with $\Psi_\varrho \otimes_{\overline \Zm_l} \overline \Qm_l \simeq 
\bigoplus_{\pi_v \in \cusp(\varrho)} \Psi_{\pi_v}$ where
the irreducible constituant of 
$\Psi_{\pi_v}$ are exactly the
perverse Harris-Taylor sheaves attached to $\pi_v$.

Recall now from \cite{boyer-torsion}, how to construct filtrations of a free perverse sheaf such that its
graduates are free. Start first from an open subscheme $j:U \hookrightarrow X$ and 
$i: F:=X \backslash U \hookrightarrow X$, such that $j$ is affine. We then have the following adjunction morphism
where we use the notation $L \htarrow L'$ (resp.   $L \htarrow_+ L'$) for a bimorphism, i.e.
both a monomorphism and a epimorphism (resp. such that its cokernel is of dimension 
strictly less than those of the support of $L$):
$$\xymatrix{ 
& L \ar[drr]^{\can_{*,L}} \\
\lexp {p+} j_! j^* L \ar[ur]^{\can_{!,L}} 
\ar@{->>}[r]|-{+} & \lexp p j_{!*}j^* L \ar@{^{(}->>}[r]_+ & 
\lexp {p+} j_{!*} j^* L \ar@{^{(}->}[r]|-{+} & \lexp p j_*j^* L
}$$
where below is, cf. the remark following 1.3.12 de \cite{boyer-torsion},
the canonical factorisation of $\lexp {p+} j_! j^* L \longrightarrow \lexp {p} j_* j^* L$
and where the maps $\can_{!,L}$ and $\can_{*,L}$ are given by the adjonction property.

\begin{nota} \label{nota-filtration0} (cf. lemma 2.1.2 of \cite{boyer-torsion})
We introduce the filtration $\Fil^{-1}_{U,!}(L) \subset \Fil^0_{U,!}(L) \subset L$ with
$$\Fil^0_{U,!}(L)=\im_\FC (\can_{!,L}) \quad \hbox{ and } \quad \Fil^{-1}_{U,!}(L)=
\im_\FC \Bigl ( (\can_{!,L})_{|\PC_L} \Bigr ),$$
where $\PC_L:=i_*\lexp p \hi^{-1}_{libre} i^*j_* j^* L$ is the kernel of
$\ker_\FC \Bigl ( \lexp {p+} j_!  j^* L \twoheadrightarrow \lexp p j_{!*} j^* L \Bigr ).$
\end{nota}

\noindent \textit{Remark}: we have
$L/\Fil^0_{U,!}(L) \simeq i_* \lexp {p+} i^* L$ and 
$\lexp p j_{!*} j^* L \htarrow_+ \Fil^0_{U,!}(L)/\Fil^{-1}_{U,!}(L)$, which gives, cf.
lemma 1.3.13 of \cite{boyer-torsion}, a commutative triangle
$$\xymatrix{
\lexp p j_{!*} j^* L \ar@{^{(}->>}[rr]_+ \ar@{^{(}->>}[drr]_+ 
& & \Fil^0_{U,!}(L)/\Fil^{-1}_{U,!}(L) \ar@{^{(}->>}[d]_+ \\
& & \lexp {p+} j_{!*} j^* L.
}$$

Consider now $X$ equipped with a stratification 
$$X=X^{\geq 1} \supset X^{\geq 2} \supset \cdots \supset X^{\geq d,}$$
and let $L \in \FC(X,\overline \Zm_l)$.
For $1 \leq h <d$, let denote
$X^{1 \leq h}:=X^{\geq 1}-X^{\geq h+1}$ and $j^{1 \leq h}:X^{1†\leq h} \hookrightarrow
X^{\geq 1}$. We then define
$$\Fil^r_{!}(L):=\im_\FC \Bigl ( \lexp {p+} j^{1 \leq r}_! j^{1 \leq r,*}L \longrightarrow L\Bigr ),$$
which gives a filtration
$$0=\Fil^{0}_{!}(L) \subset \Fil^1_{!}(L) \subset \Fil^1_{!}(L) \cdots \subset \Fil^{d-1}_{!}(L) 
\subset \Fil^d_{!}(L)=L.$$

With these notations, the graduate $\gr^h_!(\Psi_{\varrho})$ verify
$$j^{=h,*} \gr^h_!(\Psi_{\varrho}) \simeq 
\left \{ \begin{array}{ll} 0 & \hbox{si } g \nmid h \\
\LC_{\overline \Zm_l}(\varrho[t]_D) & \hbox{pour } h=tg. \end{array} \right.$$
In particular for $h=d$, the perverse sheaf $\gr^d_!(\Psi_\varrho)$ is a quotient of $\Psi_\varrho$
and concentrated of the supersingular locus. One of the main result of \cite{boyer-local-ihara}
can be stated as follow.

\begin{prop} (cf. \cite{boyer-local-ihara} proposition 4.2.4)
There exists a filtration
$$0=\Fil^{-1-u}_\varrho \subset \Fil^{-u}_\varrho \subset \Fil^{1-u}_\varrho \subset \cdots 
\Fil^0_\varrho \subset \Fil^{1}_\varrho= \gr^d_!(\Psi_\varrho)$$
verifying the following properties.
\begin{itemize}
\item The graduates $\gr^{-k}_\varrho=\Fil^{-k}_\varrho/\Fil^{-k-1}_\varrho$ are free of $\varrho$-type
$k$ in the following sense
$$\gr^{-k}_\varrho \otimes_{\overline \Zm_l} \overline \Qm_l \simeq \bigoplus_{\pi_v \in \scusp_{k}(\varrho)}
HT(\pi_v,\st_{s_k(\varrho)}(\pi_v)) \otimes \Lm_{g_k(\varrho)}(\pi_v)(\frac{1-s_k(\varrho)}{2}),$$
where for $i \geq 0$ recall $g_k(\varrho)=g_{-1}m(\varrho)l^k$ and $g_k(\varrho)s_k(\varrho)=d$.

\item For any geometric supersingular point $i_z=\{ z \} \hookrightarrow X^{=d}_{\IC,\bar s_v}$, then
$$\ind_{(D_{v,d}^\times)^0 \varpi_v^\Zm}^{D_{v,d}^\times} \lexp p h^0 i_z^* \gr^{-k}_\varrho
\bigoplus_{\pi_v \in \scusp_k(\varrho)} \Gamma_G(\pi_v) \otimes \Gamma_{DW}(\pi_v)$$
where $\Gamma_G(\pi_v)$ (resp. $\Gamma_{DW}(\pi_v)$) is a stable lattice of 
$\st_{s_k(\varrho)}(\pi_v)$ (resp. of $\pi_v[s_k(\varrho)]_D \otimes 
\Lm_{g_k(\varrho)}(\pi_v)(\frac{1-s_k(\varrho)}{2})$.

\item Let $M_d(F_v)$ be the mirabolic subgroup of $GL_d(F_v)$ defined by imposing the first column
to be $(1,0,\cdots,0)$. Then if $\tau$ is an irreducible sub-$M_d(F_v)$-representation of
$\Gamma_G(\pi_v) \otimes_{\overline \Zm_l} \overline \Fm_l$, then $\tau$ is isomorphic to 
the non degenerate representation $\tau_{nd}$, unique up to isomorphism. As a consequence any
irreducible $GL_d(F_v)$-representation of $\Gamma_G(\pi_v) \otimes_{\overline \Zm_l} \overline \Fm_l$
is isomorphic to $\rho_u$.
\end{itemize}
\end{prop}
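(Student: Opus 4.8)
The statement to be proved is Proposition (cf. \cite{boyer-local-ihara} proposition 4.2.4), which asserts the existence of a three-layered structure on the top graduate $\gr^d_!(\Psi_\varrho)$ of the filtration of stratification of $\Psi_\varrho$: a filtration $\Fil^\bullet_\varrho$ with graduates of pure $\varrho$-type, an identification of the stalk at a supersingular point in terms of explicit stable lattices $\Gamma_G(\pi_v)$, $\Gamma_{DW}(\pi_v)$, and finally the non-degeneracy persistence property of $\Gamma_G(\pi_v)$. The argument splits naturally into these three parts, and the plan is to treat them in that order, since each step feeds the next.

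\emph{Step 1: constructing the filtration $\Fil^\bullet_\varrho$.} I would start from the decomposition $\Psi_\varrho \otimes_{\overline\Zm_l} \overline\Qm_l \simeq \bigoplus_{\pi_v \in \cusp(\varrho)} \Psi_{\pi_v}$ and the fact that, rationally, the irreducible constituents of $\Psi_{\pi_v}$ are the perverse Harris-Taylor sheaves $\PC(t,\pi_v)$. On $\gr^d_!(\Psi_\varrho)$, which is concentrated on the supersingular locus $X^{=d}_{\IC,\bar s_v}$, the rational components are therefore the $HT(\pi_v,\st_{s}(\pi_v))$ with $gs=d$. Sorting the cuspidal $\pi_v$ by their $\varrho$-type $k$ (i.e. $\pi_v \in \scusp_k(\varrho)$, $k$ running from $-1$ to $u$ since $d = g_{-1}(\varrho)m(\varrho)l^u$) partitions these constituents, and the filtration is obtained by taking, for each $k$, the image-in-$\FC$ saturation of the subsheaf spanned by the $\varrho$-type $\geq k$ pieces. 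The key point is that the reduction modulo $l$ of $\pi_{v,i}[t]_D$ has semi-simplification a Zelevinsky-type sum (as recalled in the proof of Lemma \ref{lem-torsion-rel}, citing \cite{dat-jl} proposition 2.3.3), so the $\overline\Fm_l$-constituents at level $k$ come precisely from the rational pieces of $\varrho$-type exactly $k$; freeness of the graduates then follows from the construction of filtrations of stratification in \cite{boyer-torsion} together with the torsion-freeness statement (\ref{eq-resolution0}) already established for each $\scusp_i(\varrho)$.

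\emph{Step 2: the stalk at a supersingular point.} For a geometric supersingular point $i_z : \{z\} \hookrightarrow X^{=d}_{\IC,\bar s_v}$, the functor $\lexp p h^0 i_z^*$ applied to the Harris-Taylor perverse sheaf $HT(\pi_v,\st_{s_k(\varrho)}(\pi_v)) \otimes \Lm_{g_k(\varrho)}(\pi_v)(\tfrac{1-s_k(\varrho)}{2})$ recovers, by the very construction of the Harris-Taylor local systems via Igusa varieties and the Lubin-Tate tower, a stable lattice in the tensor product of $\st_{s_k(\varrho)}(\pi_v)$ (the $GL_d(F_v)$-part) with $\pi_v[s_k(\varrho)]_D \otimes \Lm_{g_k(\varrho)}(\pi_v)(\tfrac{1-s_k(\varrho)}{2})$ (the $D_{v,d}^\times \times W_v$-part). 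The induction from $(D_{v,d}^\times)^0 \varpi_v^\Zm$ to $D_{v,d}^\times$ simply accounts for the difference between the connected-component stabilizer appearing in the Igusa description and the full unit group; compatibility of $\lexp p h^0 i_z^*$ with the filtration of Step 1 then yields the displayed decomposition with $\Gamma_G(\pi_v) := $ the $GL_d$-lattice and $\Gamma_{DW}(\pi_v) := $ the $D^\times$-$W_v$-lattice cut out by the integral structure on $\gr^{-k}_\varrho$.

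\emph{Step 3: non-degeneracy persistence, the main obstacle.} The hard part is the last bullet: that every irreducible sub-$M_d(F_v)$-representation of $\Gamma_G(\pi_v) \otimes_{\overline\Zm_l} \overline\Fm_l$ is the unique non-degenerate $\tau_{nd}$, whence every irreducible $GL_d(F_v)$-subquotient of $\Gamma_G(\pi_v) \otimes \overline\Fm_l$ is $\simeq \rho_u$. I would deduce this from the cohomological interpretation: $\Gamma_G(\pi_v)$ arises inside $H^0_{!*,1}(\pi_v, s_k(\varrho))_{\mathfrak m}$ (or a closely related cohomology group of the supersingular stratum), and by Lemma \ref{lem-torsion-rel} combined with the description of the rational cohomology in Proposition \ref{prop-coho-Ql}, no tempered $\xi$-cohomological contribution can produce a degenerate constituent in the relevant degree; the Eichler-Shimura / Galois-equivariance then forces the socle of the mod-$l$ reduction to be irreducible and non-degenerate, which by the classification of irreducible mod-$l$ $GL_d(F_v)$-representations with prescribed non-degenerate socle identifies it with $\rho_u$. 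The genuinely delicate point, and the one I expect to consume most of the work, is establishing that the lattice $\Gamma_G(\pi_v)$ — not merely its rational span — has this property, i.e. controlling the \emph{integral} $M_d(F_v)$-structure; this is where the full strength of the freeness of the graduates in Step 1 and the torsion bounds of Lemma \ref{lem-torsion-rel} are needed, and where one cannot avoid a careful analysis of the Jacquet/derivative functors modulo $l$ applied to the Harris-Taylor lattices.
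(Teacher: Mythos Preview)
The paper does not give a proof of this proposition at all: it is quoted verbatim from \cite{boyer-local-ihara}, proposition 4.2.4, and followed only by a remark on $\Gamma_{DW}(\pi_v)$. So there is no ``paper's own proof'' to compare against; the result is imported as a black box from the companion paper, where the argument is a direct local analysis of the integral structure of the Lubin--Tate cohomology and of Steinberg lattices, not a Shimura-variety computation.

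Because of this, your Step~3 is going in the wrong logical direction. You propose to deduce the non-degeneracy persistence of $\Gamma_G(\pi_v)$ from Lemma~\ref{lem-torsion-rel} and the cohomology of the Shimura variety. But in the present paper the flow is the opposite: the non-degeneracy of $\Gamma_G(\pi_v)$ (taken as input from \cite{boyer-local-ihara}) is precisely what is used in \S\ref{para-torsion-classes1} to force $d_1^{-r,r}$ to create torsion, and Lemma~\ref{lem-torsion-rel} is then invoked only to show that this torsion survives to $E_\infty$. Trying to recover the third bullet from those cohomological torsion bounds is at best circular and in any case does not give you control of the integral $M_d(F_v)$-socle of the specific lattice $\Gamma_G(\pi_v)$ carved out by $\gr^d_!(\Psi_\varrho)$; Lemma~\ref{lem-torsion-rel} only constrains which $\rho_u$-isotypic torsion can appear in given degrees, not the socle structure of a given lattice. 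Your Steps~1 and~2 are reasonable outlines of how the filtration and the stalk identification arise, but for Step~3 you would have to reproduce the local argument of \cite{boyer-local-ihara} (Whittaker/derivative functors on integral Steinberg lattices inside Lubin--Tate cohomology), which is independent of everything in this paper.
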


\rem About $\Gamma_{DW}(\pi_v)$, note that for $\pi_v \in \scusp_{-1}(\varrho)$ as the modulo $l$
reduction of $\pi_v[s_k(\varrho)]_D \otimes \Lm_{g_k(\varrho)}(\pi_v)(\frac{1-s_k(\varrho)}{2})$
is irreducible, up to isomorphism, there exists only one stable lattice which is then a tensor product
$\Gamma_D(\pi_v) \otimes \Gamma_W(\pi_v)$.

In \cite{boyer-torsion} proposition 2.3.3, we explained how, using $\Fil^{-1}(U,!)$, to construct 
an exhaustive filtration of $\Psi_\varrho$
\begin{multline*}
0=\Fill^{-2^{d-1}}_{!}(\Psi_\varrho) \subset \Fill^{-2^{d-1}+1}_{!}(\Psi_\varrho) 
\subset \cdots \subset \Fill^0_{!}(\Psi_\varrho) \subset \cdots \\ \subset 
\Fill^{2^{d-1}-1}_{!}(\Psi_\varrho)=\Psi_\varrho,
\end{multline*}
such that each of the graduate is over $\overline \Qm_l$, a direct sum of Harris-Taylor perverse
sheaves of the same Newton stratum. It's then possible to refine this filtration to obtain a new one
$$\Fill_{!,\varrho}^0 \subset \Fill_{!,\varrho}^1 \subset \cdots  \subset \Fill_{!,\varrho}^r$$
where each graduate is a Harris-Taylor perverse $\overline \Zm_l$-sheaf. Note the lattices constructed
in this way, may depend of the construction but concerning the quotient of the previous proposition,
the statement remains true. Precisely we can manage so that $\grr^r_{!,\varrho}:=\Fill^r_{!,\varrho}/
\Fill^{r-1}_{!,\varrho}$ verify, with the notation of the previous proposition
$$\ind_{(D_{v,d}^\times)^0 \varpi_v^\Zm}^{D_{v,d}^\times} \lexp p h^0 i_z^* \grr^r_{!,\varrho} \simeq
\Gamma_G(\pi_v) \otimes \Gamma_D(\pi_v) \otimes \Gamma_W(\pi_v),$$
where $\pi_v$ is any fixed irreducible cuspidal representation in $\scusp_{-1}(\varrho)$.

\subsection{Main result}
\label{para-torsion-classes1}

Start from an irreducible automorphic cuspidal representation $\Pi$ of $G(\Am)$ verifying the following
properties:
\begin{itemize}
\item it is $\xi$-cohomological with non trivial invariant under some fixed $I \in \IC$;

\item its degeneracy depth is equal to $s>1$;

\item its local component at $v$ is isomorphic to $\speh_s(\pi_v)$ with $\pi_v \in \scusp_{-1}(\varrho)$
and where $d=g_u(\varrho)$ for some $u \geq 0$.
\end{itemize}

\rem For $\pi_v$ the trivial characte, the hypothesis $d=g_u(\varrho)$ for $u=0$ is equivalent to ask
that the order of $q \in \Fm_l$, which is the cardinal of the residue field of $F_v$, is equal to $d$.
Another way to formulate this condition, is to say that the $L$-function of the trivial character modulo $l$,
has a pole at $s=1$.

Denote $\mathfrak m$ the maximal ideal of $\Tm_I$ associated to $\Pi$.
Remember that $\mathfrak m$ encodes the multiset of Satake's parameters of $\Pi$ outside $I$.

\begin{prop}
Under the previous hypothesis, the torsion of $H^1(X_{I,\bar \eta_v},\LC_\xi[d-1])_{\mathfrak m}$
is non trivial.
\end{prop}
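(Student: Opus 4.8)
The plan is to run the vanishing cycles spectral sequence attached to the filtration of $\Psi_\varrho$ recalled above, and to isolate, in the portion of the $E_1$-page supported on the supersingular locus, a piece of $l$-torsion that survives to $E_\infty$. More precisely, since $\Pi$ is $\xi$-cohomological with $\Pi_v \simeq \speh_s(\pi_v)$ and $d=g_u(\varrho)$, Proposition~\ref{prop-coho-Ql} tells us that the $\mathfrak m$-localized rational cohomology of the Harris-Taylor sheaves $\PC(\pi_v,t)$ contributing to $H^{d-1}(X_{I,\bar\eta_v},\LC_\xi)_{\mathfrak m}$ is concentrated, for the relevant $t=s$, in the supersingular stratum $X^{=d}_{\IC,\bar s_v}$, i.e.\ on $\gr^d_!(\Psi_\varrho)$. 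I would first use the decomposition $\Psi_\IC \simeq \bigoplus_{g,\varrho} \Psi_\varrho$ to reduce to the single summand $\Psi_\varrho$, and then use the filtration $\Fill^\bullet_{!,\varrho}$ of $\Psi_\varrho$ whose graded pieces are the individual $\overline\Zm_l$-Harris-Taylor perverse sheaves, so that the $E_1$-terms of the resulting spectral sequence $E_1^{p,q} \Rightarrow H^{p+q}(X_{I,\bar s_v},\Psi_\varrho \otimes \LC_\xi)_{\mathfrak m} \simeq H^{p+q}(X_{I,\bar\eta_v},\LC_\xi[d-1])_{\mathfrak m}$ are the groups $H^{p+q}_!$ and $H^{p+q}_{!*}$ of the various $HT(\pi_{v,i},\st_t(\pi_{v,i}))$ studied in Lemma~\ref{lem-torsion-rel}.

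Next I would locate the single differential that produces torsion, exactly as in the second bullet of the introduction. Shifting so that $E_1^{p,q}=0$ for $p<0$, the Steinberg-type contribution with $\Pi_v=\speh_s(\pi_v)$ sits in two adjacent $E_1$-terms: the supersingular term $E_1^{0,0}$, which is (a twist of) $H^0$ of $\gr^d_!(\Psi_\varrho)$ and hence, by the last proposition recalled above, carries the lattice $\Gamma_G(\pi_v)\otimes\Gamma_D(\pi_v)\otimes\Gamma_W(\pi_v)$ whose modulo $l$ reduction has non-degenerate, hence $\rho_u$-isotypic, socle; and a term $E_1^{p,1-p}$ with $p>0$ coming from a non-trivially parabolically induced $HT(\pi_v,\st_t(\pi_v))$ with $tg<d$. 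Over $\overline\Qm_l$ the relevant differential $d_1$ restricts to an isomorphism between a common irreducible constituent $Q\cong Q'$ of these two $E_1$-terms tensored with $\overline\Qm_l$; the $\overline\Zm_l$-lattices induced on $Q$ from the supersingular side and on $Q'$ from the induced side are \emph{not} isomorphic, because the induced lattice cannot have a cuspidal non-degenerate socle while $\Gamma_G(\pi_v)\otimes_{\overline\Zm_l}\overline\Fm_l$ does (its $GL_d(F_v)$-constituents are all $\cong\rho_u$, which is cuspidal here since $d=g_u(\varrho)$). Hence the cokernel of $d_1$ on the corresponding lattices has non-trivial $l$-torsion, producing a non-zero torsion class in $E_2^{p,1-p}$ or $E_2^{0,0}$, which after the global degree shift by $d-1$ lives in $H^1(X_{I,\bar\eta_v},\LC_\xi[d-1])_{\mathfrak m}$.

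It then remains to check that this $E_2$-torsion class is a permanent cycle and is not killed by an incoming differential, i.e.\ survives to $E_\infty$. This is where Lemma~\ref{lem-torsion-rel} does the essential work: any irreducible $GL_d(F_v)$-subquotient isomorphic to $\rho_u$ of the $l$-torsion of $H^j_{!*}(\pi_{v,i},t)_{\mathfrak m}$ (resp.\ $H^j_{!}$) forces $j\in\{0,1\}$ (resp.\ $j=1$). So the only $E_1$-terms carrying $\rho_u$-isotypic $l$-torsion sit in total degree $0$ or $1$; all higher differentials out of the degree-$1$ slot land in total degree $\geq 2$ and all differentials into it come from total degree $\geq 2$ (for $r\ge 2$, $d_r$ shifts total degree by $1$ but changes $p$ by $r\ge2$, and the source/target terms are either zero after $\mathfrak m$-localization or contain no $\rho_u$-isotypic torsion by the lemma), and likewise one checks the degree-$0$ slot is not perturbed. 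Hence the torsion class persists to $E_\infty$, and the corresponding graded piece of $H^1(X_{I,\bar\eta_v},\LC_\xi[d-1])_{\mathfrak m}$ has non-trivial torsion, as claimed. The main obstacle is the second step: pinning down that the induced lattice on the term $E_1^{p,1-p}$ genuinely fails the persistence-of-non-degeneracy property while the supersingular one satisfies it, so that the two lattices inside the isomorphic $\overline\Qm_l$-pieces are provably non-isomorphic; this is exactly where one must invoke the fine lattice description from \cite{boyer-local-ihara} recalled in the proposition above, together with the hypothesis $d=g_u(\varrho)$ which makes $\rho_u$ cuspidal.
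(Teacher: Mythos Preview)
Your approach is the same as the paper's: the spectral sequence attached to the refined filtration $\Fill^\bullet_{!,\varrho}$ of $\Psi_\varrho$, the lattice mismatch between the supersingular graded piece (with its $\rho_u$-socle) and the parabolically induced target of $d_1$, and then Lemma~\ref{lem-torsion-rel} for survival. Two points need tightening.

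First, the location of the torsion class. In your shifted convention the supersingular piece sits at $p=0$, so the cokernel of $d_1^{0,0}$ contributes to $E_2^{1,0}$, not to ``$E_2^{p,1-p}$ with $p>0$ or $E_2^{0,0}$''. In the paper's (unshifted) indexing this is $E_2^{-r+1,r}$; in either case the torsion class lives at total degree $1$ in the \emph{target} of $d_1$. With the class at $E_2^{1,0}$, incoming differentials $d_r$ for $r\ge 2$ have source at $p=1-r<0$ and are therefore zero by your shifting convention; that is the correct reason, not that ``they come from total degree $\ge 2$'' (they come from total degree $0$).

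Second, and this is the genuine gap, your survival argument for the outgoing differentials uses only Lemma~\ref{lem-torsion-rel}, i.e.\ that $\rho_u$ does not occur in $E_1^{p,q}[l]$ for $p+q=2$. That is not enough: $E_r^{p,q}[l]$ for $r\ge 2$ can acquire $\rho_u$-constituents coming from the \emph{free} part of earlier pages via cokernels of differentials (any constituent of $E_r^{p,q}[l]$ is a constituent of $E_{r-1}^{p,q}\otimes_{\overline\Zm_l}\overline\Fm_l$, hence either of $E_{r-1}^{p,q}[l]$ or of $E_{r-1,free}^{p,q}\otimes\overline\Fm_l$). The paper closes this by invoking \cite{boyer-compositio}: $\rho_u$ is not a subquotient of $E_{1,free}^{p,q}\otimes\overline\Fm_l$ unless $p+q\in\{-1,0,1\}$, and for $k\ge 2$ it is never a subquotient of $E_{k,free}^{p,q}\otimes\overline\Fm_l$ when $p+q\ne 0$. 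Combining these free-part statements with Lemma~\ref{lem-torsion-rel} one proves inductively that $\rho_u$ does not occur in $E_k^{p,q}[l]$ at $p+q=2$ for any $k\ge 1$, which is what is actually needed to conclude that the class at total degree $1$ survives to $E_\infty$.
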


\begin{proof}
Consider the spectral sequence 
$$E_1^{p,q}=H^{p+q}(X_{I,\bar s_v},\grr^{-p}_{!,\varrho})_{\mathfrak m} \Rightarrow
H^{p+q}(X_{I,\bar \eta_v},V_\xi)_{\mathfrak m}$$
associated to the filtration $\Fill^{\bullet}_{!,\varrho}$ of $\Psi_\varrho$.
First note that over $\Qm_l$:
\begin{itemize}
\item $E_1^{-r,r} \otimes_{\overline \Zm_l} \overline \Qm_l$ has a direct factor isomorphic to 
$(\Pi^{\oo,v})^I \otimes \st_s(\pi_v) \otimes \Lm_g(\pi_v) (\frac{1-s}{2})$;

\item $d_1^{-r,r} \otimes_{\overline \Zm_l} \overline \Qm_l$ induces a injection from the previous
direct factor into a direct factor of $E_1^{-r+1,r}$ which, as a representation of $GL_d(F_v)$
is parabolically induced from $P_{(s-1)g,d}(F_v)$ to $GL_d(F_v)$.
\end{itemize}
From the last remark of the previous section,  
$\ind_{(D_{v,d}^\times)^0 \varpi_v^\Zm}^{D_{v,d}^\times} \lexp p h^0 i_z^* E_1^{-r,r}$
as a $GL_d(F_v)$-representation, has a subspace isomorphic to
 $\Gamma_G(\pi_v)$
where $\Gamma_G(\pi_v)$ is a stable lattice of $\st_s(\pi_v)$
such that $\rho_u$ is the only irreducible sub-representation of 
$\Gamma_G(\pi_v) \otimes_{\overline \Zm_l} \overline \Qm_l$. Moreover we know that
$\rho_u$ can't be a subspace of parabolically induced representation.
From these facts we conclude that the torsion of $E_2^{-r+1,r}$ is non trivial and more precisely
that $\rho_u$ is a subquotient of $E_2^{-r+1,r}[l]$.

Now there are two alternatives. Either $\rho_u$ as a subquotient of $E_2^{-r+1,r}[l]$ remains a
subquotient of $E_\oo^1[l]$ and we are done. Suppose by absurdity it's not the case. 
First about the free quotient $E_{k,free}^{p,q}$ of the $E_k^{p,q}$, 
we know from \cite{boyer-compositio} that:
\begin{itemize}
\item if $\rho_u$ is a subquotient of $E_{1,free}^{p,q} \otimes_{\overline \Zm_l} \overline \Fm_l$
with $p+q \neq 0$, then $\grr_{!,\varrho)}^{-p}$ is isomorphic to some $\PC(\pi_v,s_i(\varrho)-1)$
with $\pi_v \in \scusp_i(\varrho)$ and then $p+q=\pm 1$;

\item for $k \geq 2$ and $p+q \neq 0$, then $\rho_u$ is never a subquotient of
$E_{k,free}^{p,q} \otimes_{\overline \Zm_l} \overline \Fm_l$.
\end{itemize}
Then there must exist $(p,q)$ and a torsion class in $(E_{1,tor}^{p,q})_{\mathfrak m}$ with
$p+q=2$ such that $\rho_u$ is a subquotient of its $l$-torsion which contradicts lemma \ref{lem-torsion-rel}.
\end{proof}

Thanks to the main result of \cite{boyer-mrl}, associated to this torsion cohomology class is a
 tempered irreducible automorphic representation $\Pi'$ of $G(\Am)$ which is
$\xi$-cohomological of level $I^l I'_l$ and weakly $\mathfrak m$-congruent to $\Pi$ in the sense 
it shares the same multiset of Satake's parameters than $\Pi$ outside $I$. In particular for $s=2$,
as in Ribet's proof of Herbrand theorem, we should obtain a non trivial element in the Selmer group
of the adjoint representation of the Galois $\overline \Fm_l$-representation associated to $\mathfrak m$.

\bibliographystyle{plain}
\bibliography{bib-ok}

\end{document}